	\newtheorem{thm}{Theorem}
  	\newtheorem{lem}{Lemma}
  	\newtheorem{prop}{Proposition}
	\theoremstyle{definition}
	\theoremstyle{remark}
  	\newtheorem*{rem}{Remark}
	\newtheorem*{ex}{Example}
	\newcommand{\M}{\mathcal{M}}
	\newcommand{\Mbar}{\overline{\mathcal{M}}}
\title{Divisors of secant planes to curves}
\author{Nicola Tarasca}
\address{University of Utah, Department of Mathematics, 155 S 1400 E, Salt Lake City, UT 84112, USA}
\email{tarasca@math.utah.edu}
\subjclass[2010]{ 14H51 (primary),  14Q05 (secondary)} 
\keywords{Brill-Noether theory, secant divisors, effective divisors in moduli spaces of curves}
\begin{document}

\begin{abstract}
Inside the symmetric product of a very general curve, we consider the codimension-one subvarieties of symmetric tuples of points imposing exceptional secant conditions
on linear series on the curve of fixed degree and dimension. We compute the classes of such divisors, and thus obtain improved bounds for the slope
of the cone of effective divisor classes on symmetric products of a very general curve.
By letting the moduli of the curve vary, we study more generally the classes of the related divisors inside the moduli space of stable pointed curves.
\end{abstract}

\maketitle

Let $C$ be a smooth curve of genus $g$. The {\it $n$-symmetric product} $C_n$ of $C$ is the smooth variety that parametrizes divisors of $C$ of degree $n$. A geometric way to obtain subvarieties of $C_n$ is to consider {\it secant divisors} to linear series of $C$. Recall that a {\it linear series} of type $\mathfrak{g}^r_d$ is a pair $\ell=(L,V)$ of a line bundle $L\in {\rm Pic}^d(C)$ and a linear subspace $V\subset H^0(C,L)$ of dimension $r+1$. The Brill-Noether theorem says that the variety $G^r_d(C)$ of linear series of type $\mathfrak{g}^r_d$ on a general curve $C$ of genus $g$ has dimension $\rho(g,r,d):=g-(r+1)(g-d+r)$, and is empty if $\rho(g,r,d)<0$.

Given a general curve $C$ of genus $g$, the variety of divisors of $C$ imposing exceptional secant conditions on a {\it fixed} linear series of $C$ has been classically studied (\cite[\S VIII.4]{MR770932}). In the following, we consider the locus in $C_n$ of degree-$n$ divisors of $C$  imposing at most $t$ independent conditions on some {\it arbitrary} linear series in $G^r_d(C)$, that is,
\[
\mathfrak{S}^{r,t}_{g,d} (C) := \left\{ D\in C_n : \exists \, \ell \in G^r_{d}(C) \,\,\mbox{with}\,\, h^0\left( C, \ell\otimes \mathcal{O}_C (-D) \right) \geq r+1-t   \right\}. 
\]
The natural assumptions are $1\leq t\leq r$ and $n\geq t+1$.
Geometrically, when $\ell\in G^r_{d}(C)$ is very ample and thus induces an embedding $C\hookrightarrow \mathbb{P}^r$, an element $D\in C_n$ such that $h^0\left( C, \ell\otimes \mathcal{O}_C (-D) \right) \geq r+1-t$ corresponds to an $n$-secant $(t-1)$-plane to $C\subset \mathbb{P}^r$. If $t=1$, then $\mathfrak{S}^{r,1}_{g,d} (C)$ is the variety of symmetric $n$-tuples of points in $C$ mapping to an $n$-fold point in some projective model of $C$. On the other end of the range, if $t=r$, then $\mathfrak{S}^{r,r}_{g,d} (C)$ parametrizes symmetric $n$-tuples of points which become linearly dependent in some embedding $C\subset \mathbb{P}^r$.

When  $\rho(g,r,d)= (n-t)(r+1-t)-1$, the variety $\mathfrak{S}^{r,t}_{g,d} (C)$ is expected to be a  divisor in $C_n$. 
More generally, the incidence correspondence 
\[
\Sigma^{r,t}_{g,d}(C) := \{ (D,\ell) \in C_n \times G^r_d(C) \, | \, h^0(C, \ell \otimes \mathcal{O}_C(-D))\geq r+1-t \}
\]
is a determinantal subvariety of $C_n \times G^r_d(C)$ of expected dimension $n+\rho(g,r,d)-(n-t)(r+1-t)$.  
From \cite[\S 2]{MR2439633}, $\Sigma^{r,t}_{g,d}(C)$ is either empty, or pure of the expected dimension.
If $\rho(g,r,d)= (n-t)(r+1-t)-1$, the locus $\Sigma^{r,t}_{g,d}(C)$ is known to be non-empty in each of the following cases
\begin{eqnarray}
\label{nonempty}
\begin{array}{rl}
\mbox{(i)} & \mbox{$g-d+r=1$ and $n\geq (n-t)(r+1-t)$ (\cite[pg.~356]{MR770932});}\\
\mbox{(ii)} & \mbox{$g-d+r\geq 1$, $n\geq (n-t)(r+1-t)+r-t$, and $d\geq 2n-1$ (\cite{MR1104787});}\\
\mbox{(iii)} & \mbox{$g-d+r\geq 1$ and $n\leq (n-t)(r+1-t)$ (\cite[Theorem 0.5]{MR2439633});}\\
\mbox{(iv)} & \mbox{$g-d+r\geq 1$ and $t=r$.}
\end{array}
\end{eqnarray}
Case (iv) follows from the others. Indeed if $t=r$, then $\Sigma^{r,t}_{g,d}(C)$ is non-empty if $h^0(C,\mathcal{O}_C(D+E))\geq r+1$ for some $D\in C_n$ and $E\in C_{d-n}$.
Equivalently, $h^0(C,K_C\otimes \mathcal{O}_C(-D-E))\geq g-d+r$.
Since $K_C\otimes \mathcal{O}_C(-E) \in G^{g-d+n-1}_{2g-2-d+n}(C)$ for a general $E\in C_{d-n}$, we have that 
$\Sigma^{r,r}_{g,d} (C)= \Sigma^{g-d+n-1,n-r}_{g, 2g-2-d+n} (C)$. Finally, $\Sigma^{g-d+n-1,n-r}_{g, 2g-2-d+n} (C)$ always satisfies either (i) or (iii) in (\ref{nonempty}).

We conclude that $\Sigma^{r,t}_{g,d}(C)$ is pure of dimension $n-1$ when $\rho(g,r,d)= (n-t)(r+1-t)-1$ and one of the conditions in (\ref{nonempty}) holds. Moreover, the first projection $\pi_1\colon \Sigma^{r,t}_{g,d}(C) \rightarrow \mathfrak{S}^{r,t}_{g,d} (C) \subset C_n$ is generically finite on its image.
In other words, given a generic element $D=p_1+\cdots+p_n\in C_n$, there is at most a finite number of linear series $\ell$ in $G^r_d(C)$ such that $h^0(C, \ell \otimes \mathcal{O}_C(-D))\geq r+1-t$.
To verify this, by letting the $n$ points $p_i$ collide together, one shows that for any point $x$ in $C$ there is at most a finite number of linear series $\ell$ in $G^r_d(C)$ such that $h^0(C,\ell \otimes \mathcal{O}_C(-nx))\geq r+1-t$. 
This follows from \cite{MR985853} (see the discussion in \S \ref{nn}). Hence, $\mathfrak{S}^{r,t}_{g,d} (C)$ is a divisor in $C_n$ when $\rho(g,r,d)= (n-t)(r+1-t)-1$ and one of the conditions in (\ref{nonempty}) is satisfied.

Let $C$ be a {\it very general} curve, that is, a curve corresponding to a point outside the union of countably many subvarieties of the moduli space of curves of genus $g$.
The N\'eron-Severi group $N_\mathbb{Q}^1(C_n)$ is generated by the classes $\theta$ and $x$, where $\theta$ is the pull-back of the class of the theta divisor via the natural map $C_n\rightarrow {\rm Pic}^n(C)$, and $x$ is the class of the locus $p+C_{n-1}\subset C_n$ of divisors of $C$ containing a fixed general point $p\in C$ (\cite[pg.~359]{MR770932}). 
Our first result is an explicit expression for the class of the divisor $\mathfrak{S}^{r,t}_{g,d} (C)$ in $N_\mathbb{Q}^1(C_n)$.

\begin{thm}
\label{Secresult}
Fix $g,d\geq 2$, $1\leq t\leq r$, and $n\geq t+1$ such that $\rho(g,r,d)= (n-t)(r+1-t)-1$. Let $C$ be a very general curve of genus $g$, and let $s:=g-d+r$. Assume that one of the conditions in (\ref{nonempty}) is satisfied.
Then
$\mathfrak{S}^{r,t}_{g,d} (C)$ 
is an effective divisor in $C_n$, and its class in $N_\mathbb{Q}^1(C_n)$ is
\[
\left[\mathfrak{S}^{r,t}_{g,d} (C)\right] = g! \cdot n \frac{\prod\limits_{i=2}^t \dfrac{i! \cdot (n-i)}{(s-1+i)!} 
\cdot \prod\limits_{j=2}^{r+1-t} \dfrac{j! (n+j)!}{(s+n-1+j)! (n-t-1+j)!(n-1+j)}} {g(s-1)!(s+n-1)!(t-1)!(r-t)!} 
 \left(\theta -\frac{g}{n} x   \right).
\]
\end{thm}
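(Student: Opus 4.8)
The plan is to realize $\Sigma^{r,t}_{g,d}(C)$ as a degeneracy locus on the product $C_n\times G^r_d(C)$ and to push its class forward to $C_n$. Write $\Sigma:=\Sigma^{r,t}_{g,d}(C)$ and $\mathfrak{S}:=\mathfrak{S}^{r,t}_{g,d}(C)$. Over $C_n\times G^r_d(C)$ there is a universal evaluation map of vector bundles
\[
\varphi\colon \mathcal{V}\longrightarrow \mathcal{H},
\]
where $\mathcal{V}$ is the pull-back of the tautological rank-$(r+1)$ subbundle on $G^r_d(C)$ (fibre $V$ over $\ell=(L,V)$) and $\mathcal{H}$ is the rank-$n$ bundle whose fibre over $(D,\ell)$ is $H^0(C,L\otimes\mathcal{O}_D)$, the sections of $L$ evaluated along the length-$n$ divisor $D$. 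The fibre of $\varphi$ is the restriction map $V\to H^0(L\otimes\mathcal{O}_D)$, whose kernel is $H^0(\ell\otimes\mathcal{O}_C(-D))$; hence $h^0(\ell\otimes\mathcal{O}_C(-D))\geq r+1-t$ is equivalent to $\mathrm{rk}\,\varphi\leq t$, and $\Sigma$ is exactly the locus where $\varphi$ drops rank to $t$. Its expected codimension $(r+1-t)(n-t)=\rho(g,r,d)+1$ matches the dimension count recalled above, so Thom--Porteous computes $[\Sigma]$ as the $(r+1-t)\times(r+1-t)$ determinant $\det\bigl(c_{n-t+j-i}(\mathcal{H}-\mathcal{V})\bigr)_{1\le i,j\le r+1-t}$.

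First I would compute $c(\mathcal{H})$ and $c(\mathcal{V})$ in the cohomology of $C_n\times G^r_d(C)$. Using a Poincar\'e line bundle on $C\times\mathrm{Pic}^d(C)$ together with the universal divisor on $C\times C_n$, the Chern character of $\mathcal{H}$ follows either from Grothendieck--Riemann--Roch or from the standard exact sequences computing the secant/evaluation bundle as in \cite[\S VIII.2--4]{MR770932}; its classes are expressed through $x$ and $\theta$ on the $C_n$ factor and the restriction of the theta class on $\mathrm{Pic}^d(C)$. The bundle $\mathcal{V}$ contributes the Chern classes of the tautological bundle on $G^r_d(C)$. Substituting into the Porteous determinant presents $[\Sigma]$ as an explicit polynomial in $\theta$, $x$, and the tautological classes of $G^r_d(C)$.

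Next I would push forward along the first projection $\pi_1\colon \Sigma\to C_n$. Since $C$ is very general, $N^1_\mathbb{Q}(C_n)=\mathbb{Q}\theta\oplus\mathbb{Q}x$, so only the codimension-one part of $(\pi_1)_*[\Sigma]$ matters; by the projection formula this amounts to integrating the surviving tautological classes over the fibres $G^r_d(C)$. These fibre integrals are classical secant-plane/Brill--Noether numbers: the Brill--Noether class of $G^r_d(C)$ inside $\mathrm{Pic}^d(C)$ supplies the factorial denominators involving $s=g-d+r$, while the Porteous determinant evaluates to a Vandermonde-type product of binomial coefficients, which is the source of the numerator factorials $\prod_{i=2}^{t} i!$ and $\prod_{j=2}^{r+1-t} j!$. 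To isolate the two coefficients $a,b$ in $(\pi_1)_*[\Sigma]=a\,\theta+b\,x$ I would compute the two top intersection numbers $[\Sigma]\cdot\pi_1^*x^{n-1}=ag+b$ and $[\Sigma]\cdot\pi_1^*(\theta\,x^{n-2})=ag(g-1)+bg$, using $\theta^ax^{n-a}=g!/(g-a)!$ on $C_n$; each reduces to a single fibre integral over $G^r_d(C)$, and solving the resulting $2\times2$ linear system yields $a$ and $b$. The ratio then simplifies to $b/a=-g/n$, so the class is proportional to $\theta-\tfrac{g}{n}x$ with the stated constant, and effectivity is immediate from the non-emptiness hypotheses in (\ref{nonempty}).

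The main obstacle is the final bookkeeping: assembling the Porteous determinant in the mixed classes on $C_n\times G^r_d(C)$, pushing it through $G^r_d(C)$, and collapsing the resulting sums of products of binomial coefficients into the single closed-form coefficient. Keeping the computation organized through the two intersection numbers above, rather than expanding the full determinant, should control this, but it is where essentially all of the work and the risk of error lie. A final point to verify is that $\pi_1$ is birational onto $\mathfrak{S}$ — equivalently, that a generic $D$ in the divisor lies on a \emph{unique} such $\ell$ — so that $(\pi_1)_*[\Sigma]$ equals the class of the reduced effective divisor $\mathfrak{S}$; this refines the finiteness statement recalled before the theorem and, together with the generic-smoothness of $G^r_d(C)$ for very general $C$, guarantees that no spurious multiplicity is introduced.
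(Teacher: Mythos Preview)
Your approach is reasonable and, if carried through, should yield the same formula, but it is genuinely different from the paper's. The paper does \emph{not} compute directly on $C_n\times G^r_d(C)$; instead it first lets the curve vary and computes the class of the analogous divisor $\overline{\mathfrak{S}}^{r,t}_{g,d}$ in $\overline{\mathcal{M}}_{g,n}$ (Theorem~\ref{Secclass}) via test curves, reducing to enumerative counts of linear series with prescribed vanishing at a point (Proposition~\ref{Prop} and Lemma~\ref{PBN}, carried out with the Fulton--Pragacz formula on $C\times\mathrm{Pic}^{d+m}(C)$, thus bypassing $G^r_d(C)$ entirely). Theorem~\ref{Secresult} is then deduced by pulling back along $C_n\dashrightarrow \overline{\mathcal{C}}_{g,n}=\overline{\mathcal{M}}_{g,n}/\mathcal{S}_n$, using only the two coefficients $c_\psi$ and $c_{0:2}$. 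What your route buys is directness: you stay on a fixed curve and never invoke moduli. What the paper's route buys is that the same computation simultaneously produces the moduli-theoretic class in $\overline{\mathcal{M}}_{g,n}$, and the test-curve method replaces the full Porteous determinant on $C_n\times G^r_d(C)$ with two concrete point counts ($T^{r,t}_{g,d}(1)$ and $T^{r,t}_{g,d}(n-1)$), which is exactly the ``bookkeeping'' step you flag as the main obstacle. Indeed, your intersection $[\Sigma]\cdot\pi_1^*x^{n-1}$ is nothing other than $T^{r,t}_{g,d}(1)$ once one represents $x^{n-1}$ by $(n-1)p+C$ for a general $p$, so the two approaches meet at the enumerative core.

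One caution: you should not expect to establish that $\pi_1$ is birational onto $\mathfrak{S}$. The paper only proves generic finiteness, and its class formula is really that of the pushforward cycle $(\pi_1)_*[\Sigma]$; the same convention applies to your computation, so you need not (and probably cannot, in full generality) verify degree one.
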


It follows that the class $\theta -(g/n) x $ is $\mathbb{Q}$-effective for all values of $g$ and $n$ that satisfy the hypotheses of the  theorem. In particular, these  include all cases $g\geq 4$ and $g/2 \leq n \leq g-2$. 
Moreover, we remark that the classes of all the divisors $\mathfrak{S}^{r,t}_{g,d} (C)$  obtained by varying $r,t$, and $d$ (while keeping $g$ and $n$ fixed) lie on the {\it same ray} spanned by $\theta -(g/n) x $ inside $N_\mathbb{Q}^1(C_n)$.

As an example, consider the case $r=1$. For $n=2d-g\geq 2$, Theorem \ref{Secresult} recovers the formula 
\begin{eqnarray}
\label{r=1}
\left[\mathfrak{S}^{1,1}_{g,d} (C)\right] = \frac{n}{g} \binom{g}{g-d} \left(\theta -\frac{g}{n} x   \right)
\end{eqnarray}
from \cite[Corollary 3.2]{MR3102167}. Given an integer $1\leq m \leq g/2-1$, (\ref{r=1}) gives a divisor class on $C_n$ for $n=g-2m$.
Remarkably, in the case $n=g-2$, the divisor $\mathfrak{S}^{1,1}_{g,g-1} (C)$ is {\it extremal} in the cone of effective divisor classes of $C_{g-2}$ (\cite{Mus}). Moreover, in the case $r=t=2$ and $n=g-3\geq 3$, Theorem \ref{Secresult} recovers the formula for the class of the divisor $\mathfrak{S}^{2,2}_{g,g} (C)$ in $C_{g-3}$ first studied in \cite{MR3031571}.

It is natural to ask when the class of $\mathfrak{S}^{r,t}_{g,d} (C)$ is extremal in the {\it cone of effective divisor classes}  ${\rm Eff}(C_n)$  inside the two-dimensional space $N_\mathbb{Q}^1(C_n)$.
The diagonal class $\delta_C := -\theta + (g+n-1)x$ spans an extremal ray of ${\rm Eff}(C_n)$ (\cite[\S 6]{MR1149124}), while the other extremal ray is generally not known. 
The class of $\mathfrak{S}^{r,t}_{g,d} (C)$ gives us a bound for the slope of ${\rm Eff}(C_n)$ in the fourth quadrant of the $(\theta,x)$-plane for all values of $g$ and $n$ satisfying the hypotheses of Theorem \ref{Secresult}. For all such values in the range $\sqrt{g} \leq n \leq g-3$, this bound improves the one given by the $\mathbb{Q}$-effective class $\theta-\lfloor g/n \rfloor x$ (\cite[\S 7]{MR1149124}). In the cases $n=g-2$ (\cite{Mus}) and $n=g$ (\cite{MR1149124}), the ray spanned by $\theta-(g/n) x$ is known to be extremal. Better bounds are known for $n<\sqrt{g}$, while the class $\theta$ is extremal for $n\geq g+1$ (\cite[\S5, \S 7]{MR1149124}). The case $n=g-1$ is excluded by the hypotheses of Theorem \ref{Secresult}, but a full description of ${\rm Eff}(C_{g-1})$ is given in \cite{MR2797351}.

In \S \ref{Sec} we prove a result stronger than Theorem \ref{Secresult}. By letting the moduli of the curve $C$ in Theorem \ref{Secresult} vary, we consider the divisor $\mathfrak{S}^{r,t}_{g,d}$ inside the moduli space $\M_{g,n}$ defined as the locus of pointed curves $[C,x_1,\dots, x_n]$ such that the divisor $x_1+\cdots +x_n$ imposes at most $t$ independent conditions on some linear series in $G^r_d(C)$ (see Theorem \ref{Secclass}). In \S \ref{Sec}, we study the class of its closure inside the moduli space $\Mbar_{g,n}$ of stable pointed curves. As an auxiliary computation, in \S \ref{PBNdiv} we calculate the class of the pointed Brill-Noether divisor in $\Mbar_{g,1}$ obtained by letting all marked points in a general element of $\mathfrak{S}^{r,t}_{g,d}$ collide together. In order to do this, we start by counting special Brill-Noether points on a general curve in \S \ref{nn}. 
Finally, in \S \ref{symprod} we deduce Theorem \ref{Secresult} from the  class of the divisor $\overline{\mathfrak{S}}^{r,t}_{g,d}$ in $\Mbar_{g,n}$.

\vskip4pt

\noindent {\bf Acknowledgments.}
I would like to thank Gavril Farkas for many helpful discussions related to varieties of secant divisors.


\section{Brill-Noether special points on the general curve}
\label{nn}

In this section, we count linear series on a general curve with exceptional vanishing at an arbitrary point. Given a linear series $\ell=(L,V)\in G^r_d(C)$ on a general curve $C$, the {\it vanishing sequence} of $\ell$ at some point $p\in C$ is defined as the sequence $a^\ell(p) \colon 0\leq a_0 < \cdots < a_r\leq d$ of distinct vanishing orders of sections in $V$ at $p$.
 
Fix $g\geq 2$, two positive integers $r$, $d$, and an index $a: 0\leq a_0 < \cdots < a_r \leq d$ such that $\rho(g,r,d,{a}):= g-(r+1)(g-d+r)-\sum_i(a_i -i) = -1$. 
From \cite{MR985853}, a general curve $C$ of genus $g$ admits at most a finite number of linear series $\ell\in G^r_d(C)$ having vanishing sequence $a^\ell(x)=a$ at some point $x\in C$. From \cite{PointedCastelnuovo}, the number of pairs $(x,\ell) \in C\times G^r_d(C)$ such that $a^{\ell}(x)={a}$ is equal to 
\begin{eqnarray}
\label{PCa}
n_{g,r,d,a}:=
 g! \sum_{0\leq j_1<j_2\leq r } \left( (a_{j_2}-a_{j_1})^2-1  \right) \frac{\prod_{0\leq i<k\leq r}(a_k-\delta_k^{j_1}-\delta_k^{j_2}-a_i+\delta_i^{j_1}+\delta_i^{j_2})}{\prod_{i=0}^r (g-d+r+a_i-\delta_i^{j_1}-\delta_i^{j_2})!}.
\end{eqnarray}
Here, $\delta^i_j$ is the Kronecker delta, and we set $1/n!=0$, if $n<0$.  In the following, we will use the above formula in the case 
$a=(0,\dots, t-1, n, \dots, n+r-t)$. We will also need to count linear series satisfying an exceptional Brill-Noether condition on a general pointed curve.

\begin{prop}
\label{Prop}
Fix $g,d\geq 2$, $1\leq t\leq r$, and $n\geq t+1$ such that $\rho(g,r,d)= (n-t)(r+1-t)-1$. 

i) Let $C$ be a general curve of genus $g$. The number of pairs $(x,\ell) \in C\times G^r_d(C)$ such that $h^0(C,\ell\otimes \mathcal{O}_C(-n\cdot x))\geq r+1-t$ is
\[
n_{g,r,d,a} = g! \cdot n (n^2-1) \frac{\prod\limits_{i=2}^t \dfrac{i! \cdot (n-i)}{(s-1+i)!} 
\cdot \prod\limits_{j=2}^{r+1-t} \dfrac{j! (n+j)!}{(s+n-1+j)! (n-t-1+j)!(n-1+j)}} {(s-1)!(s+n-1)!(t-1)!(r-t)!} 
\]
with $a=(0,\dots, t-1, n, \dots, n+r-t)$ and $s:=g-d+r$.

ii) Let $(C,p)$ be a general pointed curve of genus $g$. For $1\leq \delta \leq n$, there exists a finite number of pairs $(x,\ell) \in C\times G^r_{d}(C)$ such that
$h^0(C, \ell\otimes \mathcal{O}_C(-\delta x - (n-\delta)p))\geq r+1-t$.
Their number is
\begin{eqnarray*}
T_{g,d}^{r,t}(\delta) :=n_{g,r,d,a}\cdot \frac{\delta (n\delta -1)}{n (n^2-1)}
\end{eqnarray*}
with $n_{g,r,d,a}$ as in $(i)$.
\end{prop}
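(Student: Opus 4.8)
The plan is to obtain part (i) as a direct evaluation of the pointed Castelnuovo formula (\ref{PCa}), and then to deduce part (ii) from it by passing from a single variable point to a variable point together with a fixed general point, tracking only how the contribution of the moving point changes.

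For part (i), I would substitute the vanishing sequence $a=(0,\dots,t-1,n,\dots,n+r-t)$ and $s:=g-d+r$ into (\ref{PCa}). The condition $h^0(C,\ell\otimes\mathcal{O}_C(-n\cdot x))\ge r+1-t$ forces the top $r+1-t$ vanishing orders of $\ell$ at $x$ to be at least $n$, and among sequences with $\rho(g,r,d,a)=-1$ this is the unique minimal choice: one has $\sum_i(a_i-i)=(n-t)(r+1-t)$, so $\rho(g,r,d,a)=\rho(g,r,d)-(n-t)(r+1-t)=-1$, which is exactly the index making the count finite via \cite{MR985853}. The remaining work is purely combinatorial: for each pair $0\le j_1<j_2\le r$ one decides whether the term survives, i.e. whether $s+a_i-\delta_i^{j_1}-\delta_i^{j_2}\ge 0$ for all $i$ (otherwise the factorial in the denominator is zero under the convention $1/m!=0$), and then simplifies the surviving products. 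Separating the pairs into those inside the block $\{0,\dots,t-1\}$, those inside $\{n,\dots,n+r-t\}$, and the cross pairs (and using that $(a_{j_2}-a_{j_1})^2-1$ vanishes on adjacent indices) collapses the sum to the stated closed form, whose overall prefactor is $n(n^2-1)$. This step is long but routine.

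For part (ii), the condition $h^0(C,\ell\otimes\mathcal{O}_C(-\delta x-(n-\delta)p))\ge r+1-t$ requires $r+1-t$ sections of $\ell$ vanishing to order $\ge\delta$ at the variable point $x$ and to order $\ge n-\delta$ at the fixed general point $p$; equivalently, setting $M:=L(-\delta x-(n-\delta)p)\in\mathrm{Pic}^{d-n}(C)$, one asks for $M\in W^{r-t}_{d-n}(C)$, and finiteness again follows from \cite{MR985853}. The crucial observation is that the residual target $W^{r-t}_{d-n}(C)$ depends only on the total weight $n=\delta+(n-\delta)$ and not on how it is split between the two points, while the point $p$, being fixed and general, imposes only a Schubert (ramification) condition carrying no variation. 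Since the one-point formula (\ref{PCa}) is itself produced by a computation on the universal curve in which the moving point contributes a second-order jet class — the origin of the quadratic factor $(a_{j_2}-a_{j_1})^2-1$ — I plan to rerun that computation with the moving point carrying weight $\delta$ and the fixed point carrying weight $n-\delta$: the factorial products governing the Brill--Noether geometry are common to both counts and cancel, whereas the moving point's jet contribution changes from $n(n^2-1)$ at weight $n$ to $\delta(n\delta-1)$ at weight $\delta$. This yields $T^{r,t}_{g,d}(\delta)=n_{g,r,d,a}\cdot\dfrac{\delta(n\delta-1)}{n(n^2-1)}$, consistent with the specialization $\delta=n$, in which $p$ disappears and part (i) is recovered.

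The main obstacle is not the final cancellation but, in part (ii), the precise identification and evaluation of the moving-point contribution: one must show that replacing the order-$n$ vanishing at a single point by an order-$\delta$ jet at the moving point plus an order-$(n-\delta)$ Schubert condition at the fixed point turns the factor $n(n^2-1)$ into exactly $\delta(n\delta-1)$, and that the resulting number is the honest reduced, multiplicity-one count rather than a virtual one. The consistency check $\delta=n$ is the principal safeguard against normalization or sign errors. In part (i), by contrast, the only real care is in pinning down which pairs $(j_1,j_2)$ contribute before simplifying.
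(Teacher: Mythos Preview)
For part (i) your plan is right but can be sharpened considerably. In the sum (\ref{PCa}) with $a=(0,\dots,t-1,n,\dots,n+r-t)$, the Vandermonde-type numerator $\prod_{i<k}(a'_k-a'_i)$ (where $a'$ is $a$ with entries $j_1,j_2$ decreased by $1$) vanishes whenever the decremented sequence has a repeated entry. A quick check shows this happens unless $j_1=0$ and $j_2=t$: decrementing any $j_1\ge1$ in the first block collides with $a_{j_1-1}$, and decrementing any $j_2\ge t+1$ in the second block collides with $a_{j_2-1}$. So there is exactly \emph{one} surviving summand, not a family of terms to be combined, and the closed form drops out of that single term. This is what the paper does.

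For part (ii) there is a genuine gap. Your argument asserts that ``the moving point's jet contribution changes from $n(n^2-1)$ to $\delta(n\delta-1)$,'' but this is precisely the content of the statement, not a derivation of it; you have not explained where $\delta(n\delta-1)$ comes from other than by reading it off the answer. (Also, the rephrasing ``$M\in W^{r-t}_{d-n}(C)$'' drops the simultaneous condition $h^0(L)\ge r+1$, which is essential.) The paper's argument is different and concrete: it sets up the locus as the degeneracy locus of a pair of bundle maps $\varphi_0,\varphi_1$ on $C\times\mathrm{Pic}^{d+m}(C)$, computes the relevant Chern classes explicitly, and applies the Fulton--Pragacz determinantal formula. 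The key observation is then that, because $\eta_1^2=\eta_1\gamma_{1,3}=\gamma_{1,3}^3=0$, the resulting expression $T^{r,t}_{g,d}(\delta)$ is a polynomial in $\delta$ of degree at most $2$. One then interpolates from three values: $T(0)=0$ (the determinant is a multiple of $\theta^{g+1}$, or geometrically the locus is empty since $\rho(g,r,d,a)=-1$ at a general point), $T(n)=n_{g,r,d,a}$ by part (i), and $T(1)$ computed directly (only two Vandermonde-type determinants survive). Your heuristic about ``jet contributions'' is pointing toward the right Chern-class mechanism, but without the quadratic-in-$\delta$ observation and the interpolation you have no way to pin down the factor.
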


\begin{proof}
Part $(i)$ follows from (\ref{PCa}). The formula for $n_{g,r,d,a}$ with $a=(0,\dots, t-1, n, \dots, n+r-t)$ has only one non-zero summand, that is, the contribution given by $j_1=0$ and $j_2=t$.

The set-up of the proof of part $(ii)$ is similar to that of the proof of (\ref{PCa}) from \cite{PointedCastelnuovo}. 
Choose $m$ such that $h^1(C,L\otimes \mathcal{O}_C(mp))=0$, for every $L\in {\rm Pic}^d(C)$. The natural maps
\[
H^0(C,L\otimes \mathcal{O}_C(mp)) \rightarrow H^0(C,L\otimes \mathcal{O}_C(mp)|_{(m+n-\delta)p+\delta x}) \twoheadrightarrow H^0(C,L\otimes \mathcal{O}_C(mp)|_{mp})
\]
globalize to
\[
\pi^*(\mathcal{E}) \rightarrow \mu_*(\nu^* \mathcal{L}\otimes \mathcal{O}_{(m+n-\delta)\Gamma_p+\delta\Delta})=:\mathcal{M}_1 \twoheadrightarrow
 \mu_*(\nu^* \mathcal{L}\otimes \mathcal{O}_{m\Gamma_p})=:\mathcal{M}_0
\]
as maps of vector bundles over $C\times {\rm Pic}^{d+m}(C)$. Here $\mathcal{L}$ is the Poincar\'e bundle on $C\times {\rm Pic}^{d+m}(C)$ normalized to be trivial on $\{p\}\times {\rm Pic}^{d+m}(C)$, the map $\pi\colon C\times {\rm Pic}^{d+m}(C)\rightarrow {\rm Pic}^{d+m}(C)$ is the second projection, and $\mathcal{E}$ is the vector bundle of rank $d+m-g+1$ defined as $\mathcal{E}:=\pi_*(\mathcal{L})$. The maps $\mu\colon C\times C\times {\rm Pic}^{d+m}(C)\rightarrow C\times {\rm Pic}^{d+m}(C)$ and $\nu\colon C\times C\times {\rm Pic}^{d+m}(C)\rightarrow C\times {\rm Pic}^{d+m}(C)$ are the projections respectively onto the first and third, and the second and third factors. Finally, $\Delta$ and $\Gamma_p$ are the pull-backs to $C\times C\times {\rm Pic}^{d+m}(C)$ respectively of the diagonal in $C\times C$ and of the divisor $C\times \{ p\}\subset C\times C$.  
Note that $\mathcal{M}_0$ is a trivial bundle of rank $m$ over $C\times {\rm Pic}^{d+m}(C)$. 

Linear series $\ell\in G^r_d(C)$ satisfying the condition $h^0(C,\ell\otimes\mathcal{O}_C(-(n-\delta)p-\delta x))\geq r+1-t$ are complete (as are linear series in part $(i)$).
Thus, we are interested in the pairs $(x, L)$ in $C\times {\rm Pic}^d(C)$ such that $h^0(C,L\otimes\mathcal{O}_C(-(n-\delta)p-\delta x))\geq r+1-t$ 
and $h^0(C,L)\geq r+1$. This is the locus in $C\times {\rm Pic}^{d+m}(C)$ where the ranks of the maps
\begin{eqnarray}
\label{varphi}
\varphi_1 \colon \pi^*(\mathcal{E}) \rightarrow \mathcal{M}_1, \quad \quad \quad \quad \quad \varphi_0 \colon \pi^*(\mathcal{E}) \rightarrow \mathcal{M}_0
\end{eqnarray}
are respectively bounded by ${\rm rank} (\varphi_1) \leq d+m-g-r+t$, and ${\rm rank} (\varphi_0) \leq d+m-g-r$. 
The finiteness of the locus of pairs considered in part $(i)$ implies the finiteness of this locus. 
Hence, we can apply the Fulton-Pragacz determinantal formula for flag bundles \cite[Theorem 10.1]{MR1154177}.

Let us compute the Chern classes of the bundles in (\ref{varphi}). Let $\eta_i$ be the pull-back of the class of a point in $C$ via the projection on the $i$ factor $\pi_i \colon C\times C \times {\rm Pic}^{d+m}(C)\rightarrow C$, for $i=1,2$, and let $\theta$ be the pull-back of the class of the theta divisor in $ {\rm Pic}^{d+m}(C)$ via $\pi_3 \colon C\times C \times {\rm Pic}^{d+m}(C)\rightarrow {\rm Pic}^{d+m}(C)$. 
Finally, given a symplectic basis $\delta_1, \dots, \delta_{2g}$ for $H^1(C, \mathbb{Z})\cong H^1( {\rm Pic}^{d+m}(C), \mathbb{Z})$, let $\delta^i_s :=\pi_i^*(\delta_s)$ for $i=1,2,3$, and define
\[
\gamma_{i,j} := - \sum_{s=1}^g (\delta^j_s \delta^i_{g+s} - \delta^j_{g+s} \delta^i_s).
\]
The following relations will ease the computation. Note that $\eta_i^2= \eta_i\cdot \gamma_{i,j} = \gamma_{i,j}^3= \theta^{g+1} =0$, for any $i=1,2$, and $j=2,3$. Moreover, it is easy to see that $\gamma_{1,2}^2 = -2g \eta_1 \eta_2$, $\gamma_{i,3}^2=-2\eta_i \theta$ for $i=1,2$, and $\gamma_{i,j}\gamma_{j,3}=\eta_j \gamma_{i,3}$ for $\{i,j\}=\{1,2\}$.
From \cite[\S VIII.2]{MR770932}, we have $c_t(\mathcal{E})=e^{-t\theta}$, and
\begin{align*}
ch(\nu^*\mathcal{L}) & =  1+ (d+m)\eta_2 + \gamma_{2,3} - \eta_2\theta, &
ch(\mathcal{O}_{(m+n-\delta)\Gamma_p+\delta\Delta}) &= 1-e^{-(\delta \eta_1 + \delta \gamma_{1,2} + (m+n) \eta_2)}.
\end{align*}
Via the Grothendieck-Riemann-Roch formula, we have
\begin{eqnarray*}
ch(\mathcal{M}_1) &=& \mu_*((1+(1-g)\eta_2)\cdot ch(\nu^*\mathcal{L} \otimes \mathcal{O}_{(m+n-\delta) \Gamma_p+\delta\Delta}))\\
&=& (m+n)+ \eta_1(g\delta^2+\delta(d-g+1-n)) +\delta \gamma_{1,3} -\delta \eta_1 \theta,
\end{eqnarray*}
whence we compute the Chern polynomial
\[
c_t(\mathcal{M}_1)= 1+\eta_1(g\delta^2+\delta(d-g+1-n))+\delta \gamma_{1,3} + (\delta-\delta^2)\eta_1 \theta.
\]
Let $c_t ^{(0)}:=c_t(\mathcal{M}_0-\mathcal{E})=e^{t\theta}$, and $c_t ^{(1)}:=c_t(\mathcal{M}_1-\mathcal{E})$, that is,
\[
c^{(1)}_1 = \eta_1 (g\delta^2+\delta(d-g+1-n))+\delta \gamma_{1,3} + \theta
\]
and 
\[
c^{(1)}_j = \frac{\theta^j}{j!}+\eta_1\theta^{j-1}\left( \frac{g\delta^2+\delta(d-g+1-n)}{(j-1)!}+\frac{\delta-\delta^2}{(j-2)!} \right) +\frac{\delta}{(j-1)!}\gamma_{1,3}\theta^{j-1}
\]
for $j\geq 2$. 
Using the Fulton-Pragacz formula, the degree of the class of the locus of pairs $(x,L)$ in $C\times {\rm Pic}^d(C)$ such that $h^0(C, L\otimes\mathcal{O}_C(-\delta x - (n-\delta)p))\geq r+1-t$ and $h^0(C,L)\geq r+1$ is 
\[
T_{g,d}^{r,t}(\delta) := \deg  \left[
\begin{array}{cccccc}
c^{(1)}_{g-d+r+n-t} &  & \cdots & & & c^{(1)}_{g-d+2r+n-t}\\
\vdots &  \ddots & && &\vdots \\
c^{(1)}_{g-d+n} & \cdots & c^{(1)}_{g-d+r+n-t} & \cdots & & c^{(1)}_{g-d+r+n}\\
c^{(0)}_{g-d+t-1} & & \cdots & c^{(0)}_{g-d+r} &\cdots & c^{(0)}_{g-d+r+t-1}\\
\vdots &  &  &  & \ddots &\vdots \\
c^{(0)}_{g-d} & &  & \cdots & & c^{(0)}_{g-d+r}
\end{array}
\right].
\]
The determinant of the above $(r+1)\times (r+1)$ matrix can be computed as follows. Since $\eta_1^2=\eta_1\gamma_{1,3}=\gamma_{1,3}^3=0$, we deduce that $T_{g,d}^{r,t}(\delta)$ is a polynomial in $\delta$ of degree at most two. Hence, it is enough to determine $T_{g,d}^{r,t}(\delta)$ for three different values of $\delta$. 

If $\delta=0$, then $T_{g,d}^{r,t}(0)=0$: indeed, the above determinant is a multiple of $\theta^{g+1}$, hence zero. Geometrically, the case $\delta=0$ corresponds to counting linear series $\ell\in G^r_d(C)$ 
such that $a^\ell(p)=a=(0,\dots, t-1, n, \dots, n+r-t)$ at a general point $p$. Since $\rho(g,r,d,a)=-1$,
such a locus is empty by \cite{MR985853}. Moreover, if $\delta=n$, then $T_{g,d}^{r,t}(n)=n_{g,r,d,a}$ by $(i)$.

Let us compute $T_{g,d}^{r,t}(1)$. Using again $\eta_1^2=\eta_1\gamma_{1,3}=\gamma_{1,3}^3=0$, the determinant $T_{g,d}^{r,t}(1)$ is by linearity  equal to the sum of the determinants of the matrices containing {\it exactly one} row with only coefficients of type
$\eta_1\theta^{j-1}(d+1-n)/(j-1)!$
and all other rows with coefficients of type $\theta^j/j!$, 
plus the determinants of the matrices containing {\it exactly two} rows with only coefficients of type
$\gamma_{1,3}\theta^{j-1}/(j-1)!$
and all other rows with coefficients of type $\theta^j/j!$. The determinants of such matrices can be expressed in terms of the following variation of the Vandermonde determinant
\begin{eqnarray*}
\Delta(b_0, \dots , b_r):=
\left[
\begin{array}{ccc}
\frac{1}{(b_r-r)!} & \cdots  & \frac{1}{b_r!}\\
\vdots &  \ddots & \vdots \\
\frac{1}{(b_0-r)!} &   \cdots & \frac{1}{b_0!}
\end{array}
\right]
= \frac{\prod_{i<j} (b_j - b_i)}{ \prod_{k=0}^r b_k!}.
\end{eqnarray*}
Fixing $s=g-d+r$, there are only two non-zero determinants in the expansion of $T_{g,d}^{r,t}(1)$, that is,
\begin{eqnarray*}
T_{g,d}^{r,t}(1) &=& (d+1-n) \cdot \deg ( \eta_1\theta^{g} )\cdot \Delta(s, \dots, s+t-1, s+n-1, s+n+1, \dots, s+n+r-t)  \\
&& {}+ \deg(\gamma^2_{1,3}\theta^{g-1}) \cdot \Delta(s, \dots, s+t-1, s+n-1, s+n, s+n+2, \dots, s+n+r-t) .
\end{eqnarray*}
Expanding, we have
\begin{multline*}
T_{g,d}^{r,t}(1)= g! \prod_{i=0}^{t-1}\frac{ i! (n-1-i) }{(s+i)! }\prod_{j=0}^{r-t} \frac{1}{(s+n+j)!}\\
{}\times \left( \frac{(d+1-n)(s+n)}{(r-t)!} \prod_{j=1}^{r-t} \frac{(n+j)!(j+1)!}{(n+j-t)!}  
 -2  \frac{(s+n+1)(s+n) }{(r-t-1)!} \frac{n!}{(n-t)!}\prod_{j=2}^{r-t} \frac{(n+j)!(j+1)!}{(n+j-t)!}  \right).
\end{multline*}
Note that we can write $d=(s+1)r +\rho(g,r,d)= (s+1)r +(n-t)(r+1-t)-1$.
Thus, the above expression can be simplified as follows
\[
T_{g,d}^{r,t}(1) =  \frac{n_{g,r,d,a}}{n(n+1)}.
\]
Interpolating the values of $T_{g,d}^{r,t}(\delta)$ for $\delta=0,1,n$, we deduce the statement in $(ii)$.
\end{proof}

\section{Pointed Brill-Noether divisors}
\label{PBNdiv}

In this section, we consider family of curves with a marked Brill-Noether special point. Let $g>2$, $r,d$, and $a: 0\leq a_0 < \cdots < a_r \leq d$ be such that $\rho(g,r,d,{a}):= g-(r+1)(g-d+r)-\sum_i(a_i -i) = -1$. Consider the locus $\mathcal{M}_{g,d}^r(a)$ in $\mathcal{M}_{g,1}$ of pointed curves $[C,x]$ with a linear series $\ell\in G^r_d(C)$ having vanishing sequence $a^\ell(x)\geq a$. From \cite{MR985853}, the class of the closure of a divisor of type $\mathcal{M}_{g,d}^r(a)$ in the moduli space of stable pointed curves $\overline{\mathcal{M}}_{g,1}$ lies in the two-dimensional cone generated by the pull-back of the class of the Brill-Noether divisor in $\overline{\mathcal{M}}_g$
\[
\mathcal{BN} := (g+3)\lambda - \frac{g+1}{6}\delta_{\rm irr} - \sum^{g-1}_{i=1} i(g-i) \delta_i
\]
and the class of the closure of the Weierstrass divisor $\M^1_{g,g}(0,g)$
\[
\mathcal{W} := -\lambda + \binom{g+1}{2} \psi - \sum_{i=1}^{g-1} \binom{g-i+1}{2} \delta_i.
\]
Here, $\lambda$ is the pull-back from $\Mbar_g$ of the first Chern class of the Hodge bundle, $\psi$ is the cotangent class at the marked point, $\delta_{\rm irr}$ is the class of the closure of the locus of nodal irreducible curves, and $\delta_i$ is the class of the closure of the locus of curves with a pointed component of genus $i$ attached at a component of genus $g-i$.

It follows that we can write $[\overline{\mathcal{M}}_{g,d}^r(a)] = \mu\cdot \mathcal{BN}+ \nu \cdot \mathcal{W} \in {\rm Pic}(\overline{\mathcal{M}}_{g,1})$ for some positive coefficients $\mu$ and $\nu$. Such coefficients can be computed via test curves: from \cite[Corollary 1]{PointedCastelnuovo}, we have
\begin{eqnarray}
\label{munu}
\mu = - \frac{n_{g,r,d,a} }{2(g^2-1)} + \frac{1 }{4\binom{g-1}{2}}\sum_{i=0}^r n_{g-1,r,d,(a_0+1-\delta^i_0,\dots,a_r+1-\delta^i_r)}
\quad\quad\mbox{and}\quad\quad
\nu = \frac{n_{g,r,d,a} }{g(g^2-1)}.
\end{eqnarray}
Here, $\delta^i_j$ is the Kronecker delta. We apply these formulae in the case $a=(0,\dots, t-1, n, \dots, n+r-t)$, using the expression for $n_{g,r,d,a}$ from Proposition \ref{Prop} $(i)$.

\begin{lem}
\label{PBN}
Fix $g,d\geq 2$, $1\leq t\leq r$, and $n\geq t+1$ such that $\rho(g,r,d)= (n-t)(r+1-t)-1$. Let $a=(0,\dots, t-1, n, \dots, n+r-t)$, $s:=g-d+r$, and $e:=d-g-t+1$.
The class of the closure of the divisor
\[
\mathcal{M}_{g,d}^r(a) := \{[C,x] \in \mathcal{M}_{g,1}  : \exists \, \ell\in G^r_d(C) \,\,\mbox{with}\,\, h^0(C,\ell\otimes \mathcal{O}_C(-n\cdot x))\geq r+1-t \}
\]
is 
\[
\left[\overline{\mathcal{M}}_{g,d}^r(a) \right] = \frac{n_{g,r,d,a}}{g(g^2-1)} \left( \sigma\cdot \mathcal{BN}+ \mathcal{W} \right)
\]
with $n_{g,r,d,a}$ as in Proposition \ref{Prop}, and
\[
\sigma:=\frac{ \left(t e +g +1\right) (n-t) \left[  (n-t) (3 t e-g-1)  +2 (g+1)  (e-t)\right] -(d+1) (g+1)^2  (d-2 g+1)}{2(g-2)s(n+s)t(n-t)(r+1-t)(n+r+1-t)}
\]
for $g\geq 3$, while $\sigma=0$ for $g=2$.
\end{lem}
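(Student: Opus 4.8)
The plan is to read the class off the test-curve computation recalled in (\ref{munu}) and then rewrite the two-term expression $\mu\cdot\mathcal{BN}+\nu\cdot\mathcal{W}$ in the normalized form of the statement. Since (\ref{munu}) already gives $\nu=n_{g,r,d,a}/(g(g^2-1))$, which is exactly the coefficient of $\mathcal{W}$ in the asserted formula, it suffices to check that the coefficient of $\mathcal{BN}$ equals $\sigma\cdot\nu$, i.e.\ that $\sigma=\mu/\nu$. Substituting the expression for $\mu$ and using $4\binom{g-1}{2}=2(g-1)(g-2)$ together with $g^2-1=(g-1)(g+1)$, this reduces to the identity
\[
\sigma=-\frac{g}{2}+\frac{g(g+1)}{2(g-2)}\cdot\frac{1}{n_{g,r,d,a}}\sum_{i=0}^{r} n_{g-1,r,d,(a_0+1-\delta^i_0,\dots,a_r+1-\delta^i_r)},
\]
with $n_{g,r,d,a}$ the explicit quantity of Proposition~\ref{Prop}$(i)$.

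First I would determine which of the summands $n_{g-1,r,d,(a_0+1-\delta^i_0,\dots,a_r+1-\delta^i_r)}$ are nonzero. Writing $b^{(i)}_j:=a_j+1-\delta^i_j$, the sequence $b^{(i)}$ is obtained from $a$ by raising every entry by one except the $i$-th. A vanishing sequence is strictly increasing, and $b^{(i)}$ violates this precisely when $a_i=a_{i-1}+1$, which collapses $b^{(i)}_{i-1}=b^{(i)}_i$. For $a=(0,\dots,t-1,n,\dots,n+r-t)$ the only consecutive gap larger than $1$ is the jump from $a_{t-1}=t-1$ to $a_t=n$ of size $n-t+1\geq 2$, so only $i=0$ and $i=t$ survive, giving $\sum_i n_{g-1,r,d,b^{(i)}}=n_{g-1,r,d,b^{(0)}}+n_{g-1,r,d,b^{(t)}}$ with
\[
b^{(0)}=(0,2,3,\dots,t,n+1,\dots,n+r-t+1),\qquad b^{(t)}=(1,2,\dots,t,n,n+2,\dots,n+r-t+1).
\]

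Next I would evaluate these two genus-$(g-1)$ pointed Castelnuovo numbers by specializing (\ref{PCa}), exactly as in the proof of Proposition~\ref{Prop}$(i)$. For each sequence only the pairs $(j_1,j_2)$ straddling the remaining gaps contribute, since the factor $(b_{j_2}-b_{j_1})^2-1$ annihilates consecutive indices and the Vandermonde-type numerator annihilates the rest; this collapses each $n_{g-1,r,d,b^{(\bullet)}}$ to a short sum of ratios of factorials. I would then form the quotients $n_{g-1,r,d,b^{(0)}}/n_{g,r,d,a}$ and $n_{g-1,r,d,b^{(t)}}/n_{g,r,d,a}$, in which the large products of Proposition~\ref{Prop}$(i)$ cancel almost completely, leaving rational functions of $g,n,t,s,d$. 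Substituting $d=(s+1)r+(n-t)(r+1-t)-1$ and introducing $e=d-g-t+1$ to compress the surviving linear factors, the displayed $\sigma$ should emerge after routine simplification. I expect this last step to be the main obstacle: correctly isolating the few nonzero summands of each $n_{g-1,r,d,b^{(\bullet)}}$ and carrying the factorial cancellations through so that the numerator collapses to the asserted polynomial, quadratic in $e$, over the denominator $2(g-2)s(n+s)t(n-t)(r+1-t)(n+r+1-t)$.

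Finally, the case $g=2$ needs a separate treatment, since $\binom{g-1}{2}=0$ makes the formula for $\mu$ in (\ref{munu}) meaningless and the genus-$(g-1)$ numbers degenerate to a genus-one count for which the cone decomposition underlying (\ref{munu}) does not apply. Here one checks directly that the $\mathcal{BN}$-contribution is absent and $[\overline{\mathcal{M}}_{g,d}^r(a)]$ is a multiple of $\mathcal{W}$ alone, which is exactly the convention $\sigma=0$ recorded in the statement.
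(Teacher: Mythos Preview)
Your proposal is correct and follows essentially the same route as the paper: both start from~(\ref{munu}), reduce the sum over $i$ to the two surviving indices $i=0$ and $i=t$ (your $b^{(0)},b^{(t)}$ are exactly the paper's $b,c$), evaluate each $n_{g-1,r,d,b^{(\bullet)}}$ via~(\ref{PCa}) by isolating the few nonvanishing $(j_1,j_2)$, take ratios against $n_{g,r,d,a}$, and simplify using the Brill--Noether identities. The paper is slightly more explicit at one point, listing the three surviving pairs in each case---$(0,1),(0,t),(1,t)$ for $b$ and $(0,t),(0,t+1),(t,t+1)$ for $c$---whereas your ``straddling the gaps'' heuristic is a shade imprecise (the real criterion is that decrementing $b_{j_1},b_{j_2}$ must not collide with a neighboring entry), but it leads to the same enumeration.
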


\begin{proof}
The statement follows from (\ref{munu}).
Note that the formula for $\mu$ reduces to
\begin{eqnarray*}
\mu = - \frac{n_{g,r,d,a} }{2(g^2-1)} + \frac{1}{4\binom{g-1}{2}} \left( n_{g-1,r,d,b} + n_{g-1,r,d,c} \right)
\end{eqnarray*}
where $b = (0,2,\dots, t, n+1, \dots, n+r-t+1)$  and $c = (1,\dots, t, n, n+2 \dots, n+r-t+1).$
The formula (\ref{PCa}) for $n_{g-1,r,d,b}$ has non-zero contributions only for $(j_1,j_2)\in \{(0,1),(0,t),(1,t)\}$. Similarly, the only non-zero contributions to $n_{g-1,r,d,c}$ are for $(j_1,j_2)\in \{(0,t),(0,t+1),(t,t+1)\}$. We have
\begin{eqnarray*}
n_{g-1,r,d,b} &=& \frac{n_{g,r,d,a}}{n^2-1} \cdot \left(\frac{(t+1)(t-1)(s-1)(s+1)(n+r-t+2)(n+r-t)}{g(n-t)(s+n)n(r-t+1)} \right.\\
		    & &{}+ \frac{((n+1)^2-1)(t+1)(n+1)(s-1)(n+r-t+2)}{2gn(n+2)}\\
		    & &{}+\left. \frac{((n-1)^2-1)(t-1)(n-1)(s+1)(n+r-t)}{2g(n-2)n} \right),\\
n_{g-1,r,d,c} &=& \frac{n_{g,r,d,a}}{n^2-1} \cdot \left( \frac{((n-1)^2-1)(n-1)(n-t-1)(r-t+2)(s+n-1)}{2gn(n-2)} \right.\\
                    & & {}+ \frac{((n+1)^2-1)(n+1)(n-t+1)(r-t)(s+n+1)}{2g(n+2)n} \\
                    & & {}+\left. \frac{(n-t-1)(n-t+1)(r-t+2)(r-t)(s+n-1)(s+n+1)}{g\cdot s\cdot t\cdot n(n+r-t+1)} \right).
\end{eqnarray*}
Modulo the identities $g=(r+1)s + \rho(g,r,d)$ and $d=(s+1)r+ \rho(g,r,d)$, the resulting formula for $\mu$ is equivalent to the total coefficient of  $\mathcal{BN}$ in the statement.
\end{proof}

\section{The divisor  \texorpdfstring{$\mathfrak{S}^{r,t}_{g,d}$}{S}}
\label{Sec}

Fix $g,d\geq 2$, $1\leq t\leq r$, and $n\geq t+1$ such that $\rho(g,r,d)= (n-t)(r+1-t)-1$. Let $C$ be a general curve of genus $g$. After the discussion in the introduction,
the locus $\mathfrak{S}^{r,t}_{g,d}(C)$ is a divisor in $C_n$ when one of the conditions in (\ref{nonempty}) is satisfied. By varying the moduli of the curve $C$, in this section we study the divisor in $\M_{g,n}$ of pointed curves $[C, x_1, \dots, x_n]$ 
such that $x_1+\cdots + x_n$ imposes at most $t$ independent conditions on some linear series of type $\mathfrak{g}^r_d$, that is,
\[
\mathfrak{S}^{r,t}_{g,d} := \left\{ [C,x_1,\dots,x_n] : \exists \, \ell \in G^r_{d}(C) \,\,\mbox{with}\,\, h^0\left( C,\ell \otimes \mathcal{O}_C\left({} - x_1-\cdots -x_n \right) \right) \geq r+1-t   \right\}.
\]

We compute the main coefficients of the class of the closure of $\mathfrak{S}^{r,t}_{g,d}$ in $\Mbar_{g,n}$.
Let us fix the notation for divisor classes on $\Mbar_{g,n}$. For $0\leq i \leq g-1 $ and $J\subset \{1,\dots, n \}$, let $\delta_{i:J}$ be the class of the closure of the locus of curves with a component of genus $i$ attached at a component of genus $g-i$, and the marked points on the component of genus $i$ are exactly those corresponding to $J$ (assume $|J|\geq 2$ if $i=0$). Denote by $\delta_{i:j}$ the sum of the classes $\delta_{i:J}$ such that $|J|=j$. Finally, $\lambda$ and $\delta_{\rm irr}$ are the pull-back of the analogous classes on $\Mbar_g$, and $\psi_i$ is the cotangent class at the point $i$, for $i=1,\dots,n$. Modulo the identity $\delta_{i,J}=\delta_{g-i, J^c}$, the classes $\lambda$, $\psi_i$, $\delta_{\rm irr}$, and $\delta_{i,J}$ form 
a basis of ${\rm Pic}\left(\Mbar_{g,n} \right)$ for $g\geq 3$, and generate ${\rm Pic}\left(\Mbar_{g,n} \right)$ for $g=2$.

\begin{thm}
\label{Secclass}
Fix $g,d\geq 2$, $1\leq t\leq r$, and $n\geq t+1$ such that $\rho(g,r,d)= (n-t)(r+1-t)-1$.
Assume that one of the conditions in (\ref{nonempty}) holds.
The class of the closure of the divisor $\mathfrak{S}^{r,t}_{g,d}$  is
\[
\left[ \overline{\mathfrak{S}}^{r,t}_{g,d}\right]  =  \frac{n_{g,r,d,a}}{g(g^2-1)} \left(c_\lambda \lambda + c_\psi \sum_{i=1}^{n} \psi_i -c_{ \rm irr} \delta_{\rm irr} - \sum_{j=2}^{ n} c_{0:j} \delta_{0:j} - \sum_{i=1}^{g-1}\sum_{j=0}^{\lfloor n \rfloor} c_{i:j} \delta_{i:j} \right) \in {\rm Pic}(\Mbar_{g,n})
\]
where $n_{g,r,d,a}$ is as in Proposition \ref{Prop}, and, for $\sigma$ as in Lemma \ref{PBN}, one has
\begin{align*}
c_\lambda &= \sigma(g+3)-1, &
c_{\rm irr} &= \sigma \cdot \frac{g+1}{6}, &
c_\psi &= \frac{(g+1)(g+n)}{2n(n+1)}, 
\end{align*}
\begin{align*} 
c_{0:j} & =  \frac{j(g+1)(n^2 +jgn -jn -g)}{2n(n^2-1)}, 
&
c_{i:0} &=  \sigma\cdot i(g-i)+ \frac{i(i+1)}{2}. 
\end{align*}
\end{thm}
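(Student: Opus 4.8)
The plan is to pin down the listed coefficients by the method of test curves, bootstrapping from the one–pointed computation in Lemma \ref{PBN}. To begin, the $S_n$–action permuting the marked points preserves $\overline{\mathfrak{S}}^{r,t}_{g,d}$, so every $\psi_i$ must occur with a common coefficient and each $\delta_{i:J}$ with a coefficient depending only on $(i,|J|)$; this already forces the displayed shape of the class and reduces the task to determining $c_\lambda$, $c_{\rm irr}$, $c_\psi$, the $c_{0:j}$, and the $c_{i:0}$ (the $c_{i:j}$ with $i,j\geq 1$ are left unspecified). The single geometric input used throughout is the collision principle invoked in the introduction: when the $n$ points come together onto a rational tail, the $n$–secant condition degenerates to the vanishing–sequence condition $a=(0,\dots,t-1,n,\dots,n+r-t)$ at the point of collision.

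To obtain $c_\lambda$, $c_{\rm irr}$, and $c_{i:0}$, I would use the clutching morphism
\[
j\colon \overline{\mathcal{M}}_{g,1}\longrightarrow \overline{\mathcal{M}}_{g,n},\qquad [C,x]\longmapsto \bigl[C\cup_x R;\,q_1,\dots,q_n\bigr],
\]
attaching at $x$ a \emph{fixed} $(n+1)$–pointed rational curve $(R;q_1,\dots,q_n,q_0)$ along $q_0\sim x$, whose image lies in $\Delta_{0:\{1,\dots,n\}}$. By the collision principle $j^{-1}\bigl(\overline{\mathfrak{S}}^{r,t}_{g,d}\bigr)=\overline{\mathcal{M}}^r_{g,d}(a)$, hence $j^*[\overline{\mathfrak{S}}^{r,t}_{g,d}]=[\overline{\mathcal{M}}^r_{g,d}(a)]$, which is given by Lemma \ref{PBN}. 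Combining this with the standard pullbacks $j^*\lambda=\lambda$, $j^*\delta_{\rm irr}=\delta_{\rm irr}$, $j^*\psi_i=0$, $j^*\delta_{0:n}=-\psi$, and $j^*\delta_{i:0}=\delta_{g-i}$ on $\overline{\mathcal{M}}_{g,1}$, and matching coefficients against $\sigma\cdot\mathcal{BN}+\mathcal{W}$ with $\sigma$ as in Lemma \ref{PBN}, recovers $c_\lambda=\sigma(g+3)-1$ and $c_{\rm irr}=\sigma(g+1)/6$, and—after the swap $i\leftrightarrow g-i$ forced by the convention for the Weierstrass class—$c_{i:0}=\sigma\,i(g-i)+\binom{i+1}{2}$. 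As a consistency check, the $\psi$–coefficient $\binom{g+1}{2}$ of $\mathcal{W}$ reproduces the value $c_{0:n}=\binom{g+1}{2}$ obtained below.

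For $c_\psi$ and the $c_{0:j}$, I would use, for each $1\le j\le n$, the test curve $G_j$ obtained by fixing a general curve $C$ and $n-j$ general points $p_1,\dots,p_{n-j}$ on it and letting a rational tail carrying the remaining $j$ markings sweep $C$ by varying its attaching point $y$ (for $j=1$ this is simply the moving marked point). By the collision principle a member of $G_j$ lies on $\overline{\mathfrak{S}}^{r,t}_{g,d}$ precisely when $h^0\bigl(C,\ell\otimes\mathcal{O}_C(-jy-p_1-\cdots-p_{n-j})\bigr)\ge r+1-t$ for some $\ell$, so $[\overline{\mathfrak{S}}^{r,t}_{g,d}]\cdot G_j = T^{r,t}_{g,d}(j)$ by Proposition \ref{Prop}(ii). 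The intersection numbers of $G_j$ with the tautological classes are elementary once the bubbling corrections are taken into account: $\lambda\cdot G_j=\delta_{\rm irr}\cdot G_j=0$; the curve $G_j$ meets $\delta_{0:j}$ with normal–bundle degree $-(2g-2)$ coming from the moving attachment point, and meets $\delta_{0:j+1}$ in the $n-j$ points where $y$ collides with a fixed marking; and each fixed marking contributes $1$ to $\sum_i\psi_i\cdot G_j$ through the comparison $\psi_i=\pi^*\psi_i+\delta_{0:\{i,n\}}$ under a forgetful map. These identities give a triangular linear system relating $c_{0:j}$, $c_{0:j+1}$, and $c_\psi$, which I would solve from $j=n$ downward: for $j=n$ there are no fixed points and $G_n$ alone yields $c_{0:n}=\binom{g+1}{2}$, and back–substitution down to $j=1$ produces $c_\psi=\tfrac{(g+1)(g+n)}{2n(n+1)}$ together with the stated $c_{0:j}$.

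The main obstacle is twofold. First, one must justify rigorously that $j^*[\overline{\mathfrak{S}}^{r,t}_{g,d}]=[\overline{\mathcal{M}}^r_{g,d}(a)]$ and, more generally, that $[\overline{\mathfrak{S}}^{r,t}_{g,d}]\cdot G_j=T^{r,t}_{g,d}(j)$ with multiplicity one: this is a limit–linear–series (admissible–cover) computation at the node, showing that the secant condition carried by the markings on the genus–$0$ tail is \emph{exactly} the vanishing sequence on the genus–$g$ aspect, together with the attendant transversality. Second, the careful bookkeeping of the bubbling corrections—above all the self–intersection $\delta_{0:j}\cdot G_j$ and the contributions of the fixed markings to $\psi_i\cdot G_j$—must be organized so that the triangular system closes up; all of the final algebraic simplifications then follow from the relations $g=(r+1)s+\rho(g,r,d)$ and $d=(s+1)r+\rho(g,r,d)$ with $s=g-d+r$ and $\rho(g,r,d)=(n-t)(r+1-t)-1$.
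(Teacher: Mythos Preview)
Your strategy is sound and would succeed, but it is organised differently from the paper. For $c_\lambda$, $c_{\rm irr}$, and $c_{i:0}$ the two arguments agree (the paper invokes \cite[Theorem~2.8]{MR1953519}, which packages exactly your clutching computation). The divergence is in how $c_\psi$ and the $c_{0:j}$ are obtained. The paper first passes to $\overline{\mathcal{M}}_{g,2}$ via the partially collapsed divisor $(\mathfrak{S}^{r,t}_{g,d})^{n-1}$: there the only boundary coefficient is $h_{0:\{x,y\}}=c_{0:n}=\binom{g+1}{2}$, so the two test curves $C_x,C_y$ (your $G_1$ and $G_{n-1}$ in disguise) give a $2\times 2$ system determining $c_\psi$ immediately. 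With $c_\psi$ in hand, the paper then gets $c_{0:2}$ from your $G_1$ and the remaining $c_{0:j}$ from a \emph{different} pencil, moving the node along the rational tail; this pencil misses $\overline{\mathfrak{S}}^{r,t}_{g,d}$ entirely and yields the clean homogeneous recursion $j\,c_\psi+(j-2)c_{0:j}-j\,c_{0:j-1}=0$. Your route instead feeds all of $G_1,\dots,G_n$ into one linear system, which is solvable but not literally triangular: every $G_j$ with $j<n$ involves $c_\psi$, so one must first express each $c_{0:j}$ as an affine function of $c_\psi$ (descending from $c_{0:n}$) and only then close up using $G_1$. The paper's detour through $\overline{\mathcal{M}}_{g,2}$ buys an early, clean isolation of $c_\psi$; your approach buys uniformity of the test curves at the cost of a larger elimination.

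Two small corrections to your bookkeeping. First, $\delta_{0:j}\cdot G_j$ is not $-(2g-2)$: the $n-j$ blow-ups where $y$ meets a fixed marking lower the self-intersection of the attaching section, so $\delta_{0:j}\cdot G_j=-(2g-2)-(n-j)=-(2g+n-j-2)$; with this value your system does recover the stated $c_\psi$ and $c_{0:j}$. Second, Proposition~\ref{Prop}(ii) is stated for a single base point $p$ with multiplicity $n-\delta$, whereas your $G_j$ has $n-j$ distinct general base points; the counts agree because the Chern classes of $\mathcal{M}_1$ in the proof depend only on the numerical class $(m+n-\delta)\eta_2$ of the fixed part, but you should say so (the paper uses the same identification for $\delta=1$ without comment).
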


\begin{ex}
We recover the case $r=1$ and $d=g$ studied in \cite{MR1953519},
and in general the case $r=1$ studied in \cite[\S 4.3]{MR2530855}.
Moreover, we recover the case $\rho(g,r,d)=0$ studied in \cite[\S 4.2]{MR2530855}, and the case $r=t=2$ and $d=g$ studied in \cite{MR3031571}. 
\end{ex}

\begin{rem}
The divisor $\overline{\mathfrak{S}}^{1,1}_{g,g-1}$ is rigid and extremal in $\Mbar_{g,g-2}$ (\cite{MR3102167}). Moreover, 
the divisor $\overline{\mathfrak{S}}^{1,1}_{g,g}$ is rigid and extremal in $\Mbar_{g,g}$,  for $2\leq g \leq 11$ (\cite{MR3093504}).
 It is natural to ask whether there are other cases in which the divisor $\overline{\mathfrak{S}}^{r,t}_{g,d}$ is rigid or extremal.
\end{rem}

\begin{proof}[Proof of Theorem \ref{Secclass}]
The proof follows the strategy from \cite[\S 3]{MR1953519} and \cite[\S 4]{MR2530855}.
Let us consider the divisor $(\mathfrak{S}^{r,t}_{g,d})^{ n}$ in $\M_{g,1}$ obtained by letting the $n$ marked points in a general element in $\mathfrak{S}^{r,t}_{g,d}$ collide together, that is,
\begin{eqnarray*}
(\mathfrak{S}^{r,t}_{g,d})^{ n} := \left\{ [C,x]\in \M_{g,1} : \exists \, \ell \in G^r_{d}(C) \,\,\mbox{with}\,\, h^0\left( C,\ell\otimes\mathcal{O}_C\left( -nx \right) \right) \geq r+1-t \right\}.
\end{eqnarray*}
This divisor coincides with the pointed Brill-Noether divisor $\M^r_{g,d}(a)$ studied in \S \ref{PBNdiv}, where $a=(0,\dots, t-1, n, \dots, n+r-t)$. 
From \cite[Theorem 2.8]{MR1953519}, the coefficients of $\lambda$, $\delta_{\rm irr}$, and $\delta_{i:0}$ in the class of $\overline{\mathfrak{S}}^{r,t}_{g,d}$ coincide with the coefficients of $\lambda$, $\delta_{\rm irr}$, and $\delta_{g-i}$ in the class of the closure of the divisor $(\mathfrak{S}^{r,t}_{g,d})^{ n}$, hence the statement for $c_\lambda$, $c_{\rm irr}$, and $c_{i:0}$ follows from Lemma \ref{PBN}.

The idea to compute the coefficient $c_\psi$ is similar.
Let $({\mathfrak{S}}^{r,t}_{g,d})^{n-1}$ be the divisor in $\M_{g,2}$ obtained by letting all marked points but one in a general element in $\mathfrak{S}^{r,t}_{g,d}$ come together, that is,
\[
({\mathfrak{S}}^{r,t}_{g,d})^{n-1} := \left\{[C,x,y] : \exists \,\ell \in G^r_{d}(C) \,\,\mbox{with}\,\, h^0\left( C,\ell\otimes\mathcal{O}_C\left( {}-x-(n-1)y \right) \right) \geq r+1-t  \right\}.
\]
The class of the closure of $({\mathfrak{S}}^{r,t}_{g,d})^{n-1}$ can be written as 
\[
\left[(\overline{\mathfrak{S}}^{r,t}_{g,d})^{n-1}\right] = \frac{n_{g,r,d,a}}{g(g^2-1)}\left(h_\lambda \lambda+ h_x \psi_x + h_y \psi_y -h_{0:\{x,y\}} \delta_{0:\{x,y\}}-\cdots \right)\in {\rm Pic}(\Mbar_{g,2}).
\]
Note that $h_\lambda=c_\lambda$ and $h_x = c_{\psi}$. Moreover, we have $h_{0:\{x,y\}}= \binom{g+1}{2}$. Indeed, if we let the two marked points in a general element in $(\mathfrak{S}^{r,t}_{g,d})^{n-1}$ collide together, we recover the locus $\Mbar^r_{g,d}(a)$ as above, and $h_{0:\{x,y\}}$ coincides with the coefficient of $\psi$ in the class of $\Mbar^r_{g,d}(a)$ from Lemma \ref{PBN}. Let $C_x:=\{[C,x,y]\}_{x\in C}$ and $C_y:=\{[C,x,y]\}_{y\in C}$ be the curves in $\Mbar_{g,2}$ obtained by fixing one general marked point  and varying the other marked point on a general curve $C$ of genus $g$.
We have
\begin{eqnarray*}
\left[(\overline{\mathfrak{S}}^{r,t}_{g,d})^{n-1}\right]  \cdot C_x &=& \frac{n_{g,r,d,a}}{g(g^2-1)} \left( (2g-1)h_x+h_y-h_{0:\{x,y\}} \right)= T_{g,d}^{r,t}(1),\\
\left[(\overline{\mathfrak{S}}^{r,t}_{g,d})^{n-1}\right]  \cdot C_y &=& \frac{n_{g,r,d,a}}{g(g^2-1)} \left( h_x+(2g-1)h_y-h_{0:\{x,y\}} \right)= T_{g,d}^{r,t}(n-1),
\end{eqnarray*}
where $T_{g,d}^{r,t}(\cdot)$ is as in Proposition \ref{Prop}, whence we recover the coefficient $h_x \equiv c_\psi$.

Intersecting $\overline{\mathfrak{S}}^{r,t}_{g,d}$ with the curve in $\Mbar_{g,n}$ obtained by fixing $n-1$ general marked points on a general curve of genus $g$ and letting one additional marked point vary, we obtain
\[
\frac{n_{g,r,d,a}}{g(g^2-1)} \left( (2g+2n-4)c_\psi -(n-1) b_{0:2}\right) = T_{g,d}^{r,t}(1),
\]
whence we deduce $c_{0:2}$. The coefficients $c_{0:j}$ for $j\geq 3$ are computed recursively. Consider a general $(n-j+1)$-pointed curve of genus $g$, and identify one of the marked points with a moving point on a rational curve having $j$ fixed marked points. The intersection of this test surface with $\overline{\mathfrak{S}}^{r,t}_{g,d}$ is empty, and we have the following relation
\[
j\cdot c_\psi + (j-2) \cdot c_{0:j} - j\cdot c_{0:j-1} =0
\]
whence we deduce $c_{0:j} = (j(j-1)/2) \cdot c_{0:2} - j(j-2) \cdot c_\psi$.
\end{proof}

\section{Divisors of secant planes in symmetric products of a ge\-ne\-ral curve}
\label{symprod}

We close by proving Theorem \ref{Secresult} after Theorem \ref{Secclass}.

\begin{proof}[Proof of Theorem \ref{Secresult}]
Let $\overline{\mathcal{C}}_{g,n}:=\Mbar_{g,n}/\mathcal{S}_n$ be the universal degree-$n$ symmetric product, and let $\pi\colon\Mbar_{g,n}\rightarrow \overline{\mathcal{C}}_{g,n}$ be the quotient map.
Let $u\colon C_n \dashrightarrow \overline{\mathcal{C}}_{g,n}$ be the rational map $u(x_1+\cdots +x_n) = [C,x_1+\cdots +x_n]$, well-defined outside the codimension-$2$ locus of effective divisors of $C$ with support of length at most $n - 2$. 
If $\widetilde{\mathfrak{S}}^{r,t}_{g,d}$ is the effective divisor in $\overline{\mathcal{C}}_{g,n}$ such that $\pi^*\widetilde{\mathfrak{S}}^{r,t}_{g,d} = \overline{\mathfrak{S}}^{r,t}_{g,d}$,
then $u^*(\widetilde{\mathfrak{S}}^{r,t}_{g,d})$ coincides with the divisor $\mathfrak{S}^{r,t}_{g,d}(C)$ in $C_n$. 
Let $\tilde{\delta}_{0:2}$ be the divisor class on $\overline{\mathcal{C}}_{g,n}$ whose pull-back via $\pi$ is ${\delta}_{0:2}$, and let $\tilde{\psi}$ be the first Chern class of the line bundle $\mathbb{L}$ on $\overline{\mathcal{C}}_{g,n}$ defined as $\mathbb{L}[C,x_1+\cdots +x_n]= T^\vee_{x_1}(C)\otimes \cdots \otimes  T^\vee_{x_n}(C)$ over a point $[C,x_1+\cdots +x_n]$ in  $\overline{\mathcal{C}}_{g,n}$. Note that $\pi^*(\tilde{\psi})=\sum_i \psi_i -\sum_{j=2}^n j \cdot\delta_{0:j}$ (\cite{MR3102167}).
One has $u^*(\tilde{\delta}_{0:2})=\delta_C = -\theta +(g+n-1)x$ and $u^*(\tilde{\psi})=\theta + \delta_C +(g-n-1)x = (2g-2)x$ (\cite[Proposition 2.7]{MR1930983}). We deduce
\begin{eqnarray*}
\left[ \mathfrak{S}^{r,t}_{g,d}(C)\right] &=& \frac{n_{g,r,d,a}}{g(g^2-1)} \left((c_{0:2} -2c_\psi)\cdot \theta +((2g-2)c_\psi - (g+n-1)(c_{0:2} -2c_\psi)) \cdot x \right) \\
&=& \frac{n_{g,r,d,a}}{g(n^2-1)} \left(\theta - \frac{g}{n}x \right)
\end{eqnarray*}
where $n_{g,r,d,a}$ is as in Proposition \ref{nn}, and $c_{0:2}$ and $c_\psi$ are as in Theorem \ref{Secclass}.
\end{proof}

\bibliographystyle{alpha}
\bibliography{Biblio.bib}

\end{document}